\def\frk{\frak}               
\def\Phi{{\frk n}}
\def\Phi{{\frk N}}
\def\opn#1#2{\def#1{\operatorname{#2}}} 
\opn\chara{char} \opn\length{\ell} \opn\pd{pd} \opn\rk{rk}
\opn\projdim{proj\,dim} \opn\injdim{inj\,dim} \opn\rank{rank}
\opn\depth{depth} \opn\grade{grade} \opn\height{height}
\opn\embdim{emb\,dim} \opn\codim{codim}
\opn\Tr{Tr} \opn\bigrank{big\,rank}
\opn\superheight{superheight}\opn\lcm{lcm}
\opn\trdeg{tr\,deg}
\opn\reg{reg} \opn\lreg{lreg} \opn\ini{in} \opn\lpd{lpd}
\opn\size{size}\opn\bigsize{bigsize}
\opn\cosize{cosize}\opn\bigcosize{bigcosize}
\opn\sdepth{sdepth}\opn\sreg{sreg}
\opn\link{link}\opn\fdepth{fdepth}
\opn\index{index}
\opn\index{index}
\opn\indeg{indeg}
\opn\N{N}
\opn\SSC{SSC}
\opn\SC{SC}
\opn\conv{conv}
\opn\div{div} \opn\Div{Div} \opn\cl{cl} \opn\Cl{Cl}
\opn\Spec{Spec} \opn\Supp{Supp} \opn\supp{supp} \opn\Sing{Sing}
\opn\Ass{Ass} \opn\Min{Min}\opn\Mon{Mon} \opn\dstab{dstab} \opn\astab{astab}
\opn\Syz{Syz}
\opn\reg{reg}
\opn\Ann{Ann} \opn\Rad{Rad} \opn\Soc{Soc}
\opn\Im{Im} \opn\Ker{Ker} \opn\Coker{Coker} \opn\Am{Am}
\opn\Hom{Hom} \opn\Tor{Tor} \opn\Ext{Ext} \opn\End{End}
\opn\Aut{Aut} \opn\id{id}
\opn\nat{nat}
\opn\pff{pf}
\opn\Pf{Pf} \opn\GL{GL} \opn\SL{SL} \opn\mod{mod} \opn\ord{ord}
\opn\Gin{Gin} \opn\Hilb{Hilb}\opn\sort{sort}
\opn\initial{init}
\opn\ende{end}
\opn\height{height}
\opn\type{type}
\opn\aff{aff} \opn\con{conv} \opn\relint{relint} \opn\st{st}
\opn\lk{lk} \opn\cn{cn} \opn\core{core} \opn\vol{vol}
\opn\link{link} \opn\star{star}\opn\lex{lex}\opn\Mon{Mon}\opn\Min{Min}
\opn\gr{gr}
\def\pot#1#2{#1[\kern-0.28ex[#2]\kern-0.28ex]}
\opn\dirlim{\underrightarrow{\lim}}
\opn\inivlim{\underleftarrow{\lim}}
\def\Implies{\ifmmode\Longrightarrow \else
        \unskip${}\Longrightarrow{}$\ignorespaces\fi}
\def\implies{\ifmmode\Rightarrow \else
        \unskip${}\Rightarrow{}$\ignorespaces\fi}
\def\iff{\ifmmode\Longleftrightarrow \else
        \unskip${}\Longleftrightarrow{}$\ignorespaces\fi}
\newtheorem{Theorem}{Theorem}[section]
 \newtheorem{Lemma}[Theorem]{Lemma}
 \newtheorem{Example}[Theorem]{Example}
 \newtheorem*{Definition*}{Definition}
 \newtheorem*{Conjecture*}{Conjecture}
\let\epsilon\varepsilon
\let\kappa=\varkappa
\def\qed{\ifhmode\textqed\fi
      \ifmmode\ifinner\quad\qedsymbol\else\dispqed\fi\fi}
\def\textqed{\unskip\nobreak\penalty50
       \hskip2em\hbox{}\nobreak\hfil\qedsymbol
       \parfillskip=0pt \finalhyphendemerits=0}
\def\dispqed{\rlap{\qquad\qedsymbol}}
\opn\dis{dis}
\def\pnt{{\raise0.5mm\hbox{\large\bf.}}}
\opn\Lex{Lex}
\begin{document}

 \title{Koszul Filtrations and finite lattices}

 \author {Dancheng Lu}
 \author{Ke Zhang}

 \begin{abstract}
In this note, we  characterize when a finite lattice is distributive  in terms of the existences of some particular classes of   Koszul filtrations.
 \end{abstract}

\subjclass[2010]{Primary 05E40,13A02; Secondary 06D50.}
\keywords{Koszul filtration, finite lattice, modular, distributive}
\address{Dancheng Lu, Department of Mathematics, Soochow University, P.R.China} \email{ludancheng@suda.edu.cn}
\address{Ke Zhang, Department of Mathematics, Soochow University, P.R.China} \email{319793057@qq.com}

 \maketitle

\section{Introduction}

Let $K$ be a field and $L$ a finite lattice. We use $K[L]$ to denote the polynomial ring over a field $K$ whose variables are elements of $L$. A binomial in $K[L]$ of the form $ab-(a\vee b)(a\wedge b)$, where $a,b\in L$ are incomparable, is called a basic binomial or a Hibi relation. By definition the {\it join-meet ideal} $I_L$  is the ideal of $K[L]$ generated by all basic  binomials.  Set $H[L]:=K[L]/I_L$.  It was shown in \cite{H} that  $H[L]$ is a toric ring if and only if $L$ is distributive. In this case, $H[L]$ is called a {\it Hibi ring}.

   Assume that $R$ is a standard graded $K$-algebra. Recall that a collection  $\mathcal{F}$ of ideals of $R$ is  a {\it Koszul filtration} if

(1) Every ideal in $\mathcal{F}$ is generated by linear forms,

(2) The ideals $0$ and the maximal graded ideal $\mathrm{m}$ of $R$ belong to $\mathcal{F}$,

(3) For any ideal $0\neq I\in \mathcal{F}$, there exists an ideal $J\in \mathcal{F}$ such that $J\subset I$, $I/J$ is cyclic and $J:I\in \mathcal{F}$.

This notion, firstly introduced in \cite{CTV}, was inspired by the work of  Herzog, Hibi and Restuccia \cite{HHR} on strongly
Koszul algebras. Its significance  is that if $R$ admits a Koszul filtration then $R$ is Koszul, that is, the residue field $K$ has a linear $R$-free resolution as an $R$-module, thus it provides an  effective way to show  a standard graded algebra to be Koszul.

\vspace{1mm}
   The $K$-algebra $H[L]$ is  standard graded by setting $\mathrm{deg}(a)=1$ for each $a\in L$. Recall that a subset $J$ of a lattice $L$ is called a {\it poset ideal} if for any $a,b\in L$ with $a\leq b$ and $b\in J$ one has $a\in J$. Let $J$ be a poset ideal of $L$. We denote by $(\overline{J})$ the ideal of $H[L]$ generated by elements $\overline{a}=:a+I_L$ with  $a\in J$.
The ideal of the form $(\overline{J})$ is called a {\it poset ideal} of $H[L]$.
It was  proved in \cite{EHH} that if $L$ is  distributive then all the poset ideals of $H[L]$ form a Koszul filtration of $H[L]$.

\vspace{1mm}
The objective of this note is to characterize when a finite lattice is distributive by the existences of some particular classes of Koszul filtrations. We firstly  show that $L$  is distributive if $H[L]$ admits a Koszul filtration in which every element is a poset ideal. Next we introduce the notion of a combinatorial Koszul filtration. By definition, a Koszul filtration $\mathcal{F}$ of $H[L]$ is  {\it combinatorial} if every ideal of $\mathcal{F}$  is generated by residue classes of some elements of $L$ (i.e., variables). We show that  a modular lattice $L$ is distributive if and only if $H[L]$ admits a combinatorial Koszul filtration. An example (Example~\ref{2.2}) is given to show that the  restrictive attribute ``modular" cannot be removed from the last statement.

\begin{figure}[ht!]

\begin{tikzpicture}[line cap=round,line join=round,>=triangle 45,x=1.5cm,y=1.5cm]

\draw (7,3)-- (6,2)--(7,1);
\draw (7,3)-- (7,2)--(7,1);
\draw (7,3)-- (8,2)--(7,1);
\draw (7,3) node[anchor=south east]{$f$};

\draw (7.1,0.7) node[anchor=south east]{$e$};

\draw (5.9,1.8) node[anchor=south east]{$x$};
\draw (6.9,1.8) node[anchor=south east]{$y$};
\draw (8.3,1.8) node[anchor=south east]{$z$};
\draw (8,0.5) node[anchor=north east]{Diamond Lattice};

\fill [color=black] (7,3) circle (1.5pt);
\fill [color=black] (7,1) circle (1.5pt);\fill [color=black] (6,2) circle (1.5pt);\fill [color=black] (8,2) circle (1.5pt);
\fill [color=black] (7,2) circle (1.5pt);

\draw (11,3)--(12,2);
\draw (12,2)--(11,1);
\draw (12.3,0.5) node[anchor=north east]{Pentagon Lattice};

\fill [color=black] (11,3) circle (1.5pt);
\fill [color=black] (10,2.4) circle (1.5pt);
\fill [color=black] (10,1.6) circle (1.5pt);\fill [color=black] (11,1) circle (1.5pt);\fill [color=black] (12,2) circle (1.5pt);
\draw (11,3)--(10,2.4)--(10,1.6)--(11,1);
\draw (11.1,3) node[anchor=south east]{$f$};

\draw (9.9,2.2) node[anchor=south east]{$x$};
\draw (9.9,1.4) node[anchor=south east]{$y$};
\draw (12.25,1.8) node[anchor=south east]{$z$};
\draw (11.2,0.7) node[anchor=south east]{$e$};
\end{tikzpicture}
\caption{}\label{G}
\end{figure}

\section{Characterizations of distributive lattices}

We refer readers to \cite{S} for basic knowledges on finite lattices.  For a finite lattice $L$, we denote by $\max L$ and $\min L$ its largest and least elements  respectively. A finite lattice $L$ is called {\it modular} if $x\leq b$ implies $x\vee (a\wedge b)=(x\vee a)\wedge  b$ for all $x,a,b\in  L$. A finite lattice is modular if and only if no sublattice of $L$ is isomorphic to the pentagon lattice of Figure 1. Here a nonempty subset $L'$ of $L$ is called a {\it sublattice} of $L$ if for any $a,b\in L'$,  both $a\vee b$ and $a\wedge b$ belong to $L'$. A finite lattice $L$ is called {\it distributive} if, for all $x,y,z\in L$, the distributive laws $x\wedge(y \vee  z)=( x\wedge y)\vee(y \wedge z)$ and $x\vee(y \wedge z)=( x\vee y)\wedge(y \vee z)$ hold. Every distributive lattice is modular. A modular lattice is distributive if and only if no sublattice of $L$ is isomorphic to the diamond lattice in Figure~\ref{G}.

\begin{Theorem} Let $L$ be a finite lattice and $R=H[L]$. The following statements are equivalent:

$\mathrm{(1)}$ $L$ is distributive;

$\mathrm{(2)} $ All poset ideals of $R$ form a Koszul filtration of  $R$;

$\mathrm{(3) }$ $R$ admits  a Koszul filtration in which every ideal is a  poset ideal.

\end{Theorem}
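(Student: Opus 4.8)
I would prove the three conditions equivalent as a cycle $(1)\Rightarrow(2)\Rightarrow(3)\Rightarrow(1)$. The implication $(1)\Rightarrow(2)$ is exactly the result of \cite{EHH} quoted in the introduction, and $(2)\Rightarrow(3)$ is immediate: the collection of \emph{all} poset ideals is itself a Koszul filtration whose members are poset ideals. So the entire content is $(3)\Rightarrow(1)$, which I would prove by contraposition: assuming $L$ is not distributive, I exhibit an obstruction to the existence of any Koszul filtration consisting of poset ideals.

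The first step is a reduction. Suppose $\mathcal F$ is such a filtration. Starting from $\mathfrak m=(\overline L)\in\mathcal F$ and iterating condition $(3)$ of the definition, I note that since every ideal of $\mathcal F$ is a poset ideal $(\overline Q)$ whose degree-one component has dimension $|Q|$, cyclicity of $I/J$ at each step forces $J$ to be $I$ with a single variable removed. Hence $\mathcal F$ contains a maximal chain $\emptyset=Q_0\subset Q_1\subset\cdots\subset Q_n=L$ of poset ideals with $Q_j=Q_{j-1}\cup\{c_j\}$, $c_j$ maximal in $Q_j$, and the colon at each step simplifies to $(\overline{Q_{j-1}}):\overline{c_j}$, which must again lie in $\mathcal F$ and thus be a poset ideal. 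This single consequence of $(3)$ is all I use.

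For the lattice input, since $L$ is not distributive it contains a sublattice isomorphic to the pentagon or the diamond of Figure~\ref{G}; in both cases one extracts elements $s<w<t$ and $u\neq v$ with $w\wedge u=w\wedge v=s$ and $w\vee u=w\vee v=t$. Then in $R$ one has $\overline w\,\overline u=\overline t\,\overline s=\overline w\,\overline v$, so $\overline t\,\overline s\,(\overline u-\overline v)=0$, and therefore $\overline t(\overline u-\overline v)\in(\overline P):\overline s$ for \emph{every} poset ideal $P$ with $s\notin P$. I then focus on the step of the chain at which $s$ is adjoined: there $P:=Q_{j_0-1}$ is a down-set containing every element $<s$ and, being a down-set avoiding $s$, no element $\ge s$; in particular $t,u,v\notin P$. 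It remains to see that this forced colon $(\overline P):\overline s$ is not a poset ideal, i.e.\ not generated in degree one. The key structural fact, valid for an arbitrary finite lattice, is that the degree-two chain monomials $\overline a\,\overline b$ with $a\le b$ (including $a=b$) form a $K$-basis of $R_2$: each Hibi relation merely identifies the incomparable pair $\{a,b\}$ with the comparable pair $\{a\vee b,a\wedge b\}$, and every class of the resulting equivalence has a unique comparable representative. In this basis $(\overline P)_2$ and $R_1\cdot\big((\overline P):\overline s\big)_1$ are coordinate subspaces; a short computation gives $\big((\overline P):\overline s\big)_1=\langle\overline x:x\not\ge s\rangle$, while the basis vectors $\overline t\,\overline u$ and $\overline t\,\overline v$ survive modulo $(\overline P)$ and avoid $R_1\cdot\big((\overline P):\overline s\big)_1$, since any expression $\overline t\,\overline u=\overline a\,\overline x$ would force $x\ge u\ge s$. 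Thus $\overline t(\overline u-\overline v)$ is a genuine degree-two minimal generator, the colon is not a poset ideal, and the chain cannot exist — contradiction.

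The main obstacle is precisely this last colon computation: one must guarantee that the degree-two zerodivisor $\overline t(\overline u-\overline v)$ is not absorbed into the linear part of the colon. What makes it tractable is that non-distributivity of $L$ affects $R$ only in degrees $\ge 3$, so that in degree two everything reduces to bookkeeping with chain monomials and coordinate subspaces; carrying out that bookkeeping carefully — in particular verifying the identity $\big((\overline P):\overline s\big)_1=\langle\overline x:x\not\ge s\rangle$ — is the one place where I would proceed most cautiously.
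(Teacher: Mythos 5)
Your proposal is correct and follows essentially the same route as the paper's: both reduce condition (3) to the single colon step at which $e=\min L'$ is adjoined to a poset ideal $P$, and both use the same degree-two witness $\overline{f}(\overline{x}-\overline{y})\in(\overline{P}):\overline{e}$ together with the computation $\bigl((\overline{P}):\overline{e}\bigr)_1=\langle \overline{a}: a\not\geq e\rangle$ to conclude that this colon is not generated by linear forms. The only real difference is bookkeeping: you carry out the coefficient comparisons in the normal-form basis of chain monomials of $R_2$ (which is a clean and valid way to do it), whereas the paper lifts everything to $K[L]$ and compares coefficients via an explicit decomposition of $(I_L,J)$.
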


\begin{proof} (1)$\Rightarrow$ (2) It follows from \cite[Corollary 2.6]{EHH}.

(2)$\Rightarrow$ (3) Trivially.

(3)$\Rightarrow$ (1)  Suppose that $L$ is not distributive. Then $L$ admits a sublattice $L'$ which is either a diamond lattice or a pentagon lattice as in Figure~\ref{G}. Set $e=\min L'$ and $f=\max L'$.  We first prove the following claim.

\vspace{1mm}
{\it Claim:} If $I$ is a poset ideal of $L$ and   $e$ is a maximal element of $I$, then $(\overline{J}):\overline{e}$ is not generated by linear forms. Here $J:=I\setminus \{e\}$.

\vspace{1mm}
In order to prove our claim, we first show that $[(I_L,J):e]_1$, the  linear part of the colon ideal $(I_L,J):e$, is the $K$-span  of $H$ in $K[L]$, where $H$ is the poset ideal $\{a\in L\:\; a\ngeq e\}$. Let $a\in H$. If $a<e$  then $a\in J$; if $a,e$ are incomparable then $ae=ae- (a\wedge e)(a\vee e)+(a\wedge e)(a\vee e)\in (I_L,J)$. Hence $H \subseteq [(I_L,J):e]_1$. For the converse, let $d\in [(I_L,J):e]_1$. Note that $(I_L,J)=A_1+A_2+A_3$, where $A_1=(J,eH)$, $A_2$ is the ideal of $H[L]$ generated by binomials $ab-(a\vee b)(a\wedge b)$ with $e\notin \{a,b,a\vee b,a\wedge b\}$ and $A_3$ is the ideal of $H[L]$ generated by binomials $ab-e(a\vee b)$ with $a\wedge b=e$. Since $ed\in (I_L,J)$, there is a decomposition $ed=d_1+d_2+d_3$ such that $d_i\in A_i$ and $\deg (d_i)=2$  for $i=1,2,3$. For each $i$, write $d_i$ uniquely  as $d_i=eg_i+h_i$, such that every monomial in the support of  $h_i$ is not divided by $e$. Since $A_1$ is a monomial ideal, $eg_1\in A_1$ and $h_1\in A_1$, and this implies $g_1,h_1\in (J,H)=(H)$. Since $eg_2+h_2\in A_2$, there exist a positive integer $\ell$ and  $k_1,\ldots,k_{\ell}\in K$ such that $$eg_2+h_2=k_1(a_1b_1-(a_1\wedge b_1)(a_1\vee b_1))+\cdots+k_{\ell}(a_{\ell}b_{\ell}-(a_{\ell}\wedge b_{\ell})(a_{\ell}\vee b_{\ell})).   \qquad (1)$$ Here $e\notin\{a_j, b_j, a_j\wedge  b_j, a_j\vee b_j\}$ for $j=1,\ldots,\ell$. It follows from (1) that $g_2=0$ and $h_2\in A_2$. Since $eg_3+h_3\in A_3$, there exist a positive integer $l$ and $k'_1,\ldots,k'_l\in K$ such that $$eg_3+h_3=k'_1(a_1b_1-e(a_1\vee b_1))+\cdots+k'_l(a_lb_l-e(a_l\vee b_l)).   \qquad (2)$$ Here $a_j\wedge b_j=e$ for $j=1,\ldots,l$. It follows from (2) that $h_3=k'_1a_1b_1+\cdots+k'_la_lb_l$. Note that $h_1+h_2+h_3=0$ and the support of $h_3$ is disjoint with the support of $h_i$ for $i=1,2$, one has $h_3=0$ and this implies  $k'_j=0$ for $j=1,\ldots,l$. In particular, $g_3=0$. Therefore, $d=g_1$ belongs to the $K$-span of $H$ in $K[L]$.

\vspace{1mm}
 Assume now on the contrary that  $(\overline{J}):\overline{e}$ is  generated by linear forms.  Since $e(fx-fy)\equiv yzx-xzy\equiv 0 \ (\mathrm{mod}\ I_L)$, we obtain $fx-fy \in (I_L,J):e$. Here, $e,f,x,y,z$ are all elements of $L'$,  see Figure~\ref{G}. It follows that $fx-fy\in (I_L,H)$ from the assumption together with the conclusion of the preceding paragraph.  Thus we can express $fx-fy$ as $fx-fy$
  $$=\sum_{i\in A}k_i(a_ib_i-fx)+\sum_{i\in B}k_i(a_ib_i-fy)+\sum_{i\in C}k_i(a_ib_i-(a_i\vee b_i)(a_i\wedge b_i))+\sum_{i\in D}k_ih_it_i.\ (3)$$
Here,  $a_i\vee b_i=f$ and $a_i\wedge b_i=x$ for $i\in A$,  $a_i\vee b_i=f$ and $a_i\wedge b_i=y$ for $i\in B$, $(a_i\vee b_i)(a_i\wedge b_i)\notin\{fx,fy\}$ for $i\in C$ and $h_i\in H,t_i\in L$ for $i\in D$. Note that $h_jt_j\neq a_ib_i$ if $i\in A$ and $h_j\in H$. Comparing the coefficients of $a_ib_i$ and $fx$ in (3) respectively one has $k_i=0$ for each $i\in A\cup B$ and $\sum_{i\in A}k_i=-1$, a contradiction. Thus we  complete the proof of our  claim.

\vspace{1mm}
  If $R$ has  a Koszul filtration $\mathcal{F}$ consisting  of poset ideals, then there is a poset ideal, say $(\overline{I})$, which is minimal among poset ideals of $\mathcal{F}$ containing $\overline{e}$. Then $e$ is a maximal element of $I$ and $J=I\setminus \{e\}$ is a poset ideal of $L$. It follows that $(\overline{J})$ is a unique poset ideal contained in $(\overline{I}$) such that $(\overline{I})/(\overline{J})$ is cyclic and $(\overline{J})\in \mathcal{F}$. However by the claim, $(\overline{J}):\overline{e}=(\overline{J}):(\overline{I})$ is not generated by linear forms, a contradiction.  \end{proof}

  We say that a Koszul filtration $\mathcal{F}$ of $H[L]$ is {\it combinatorial} if every ideal in $\mathcal{F}$  is generated by the residue classes of some elements of $L$ (i.e., variables). It is natural to ask if only for a distributive lattice $L$, the algebra $H[L]$ has a combinatorial Koszul filtration. This is not the case as shown by the following example.

\begin{Example} \label{2.2}  \em{
 Let $P$ be the pentagon lattice as in Figure~\ref{G}. Then $H[P]$ has the following combinatorial Koszul filtration:
$$(0), (\overline{x}), (\overline{x},\overline{y}), (\overline{x},\overline{z}), (\overline{x},\overline{y},\overline{z}), (\overline{x},\overline{y},\overline{z},\overline{e}), (\overline{x},\overline{y},\overline{z},\overline{f}), (\overline{x},\overline{y},\overline{z},\overline{e},\overline{f}).$$  One can check the following equalities by Singular \cite{DGPS}:}

$0:(\overline{x})=0$; $(\overline{x}):(\overline{y})=(\overline{z},\overline{x})$; $(\overline{x}):(\overline{z})=(\overline{y},\overline{x})$;  $(\overline{x},\overline{y}):(\overline{z})=(\overline{x},\overline{y})$; $(\overline{x},\overline{y},\overline{z}):\overline{e}=(\overline{x},\overline{y},\overline{z},\overline{f})$; $(\overline{x},\overline{y},\overline{z}):\overline{f}=(\overline{x},\overline{y},\overline{z},\overline{e})$; $(\overline{x},\overline{y},\overline{z},\overline{e}):\overline{f}=(\overline{x},\overline{y},\overline{z},\overline{e}).$
\end{Example}

We need to introduce some more notation. A finite lattice is called {\it pure} if all maximal chains (totally ordered subsets) have the same length. When a finite lattice is pure,  the rank of $a$ in $L$, denoted by $\mathrm{rank}(a)$, is the largest integer $r$ for which there exists a chain of $L$ of the form $a_0 <a_1<\cdots <a_r = a$. If  $L$ is modular, then $L$ is pure and the following  equality  holds for any $p,q\in L$:$$\mathrm{rank}(p)+\mathrm{rank}(q)=\mathrm{rank}(p\wedge q)+\mathrm{rank}(p\vee q).$$

We record  \cite[lemma 1.2]{EH} in the following lemma for the later use.

\begin{Lemma} \label{lemma} Let $L$  be a modular non-distributive lattice. Then $L$ has a diamond sublattice $L'$ such that $\mathrm{rank}( \max L')-\mathrm{rank} (\min L')=2$.
\end{Lemma}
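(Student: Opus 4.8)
The plan is to prove Lemma~\ref{lemma} by extracting a diamond sublattice of \emph{rank-height exactly} $2$ from a modular non-distributive lattice $L$. Since $L$ is modular and non-distributive, the characterization recalled in the excerpt guarantees that $L$ contains \emph{some} diamond sublattice, say $L_0$ with atoms $x,y,z$ (pairwise incomparable), least element $e=\min L_0$ and greatest element $f=\max L_0$, satisfying $x\vee y=x\vee z=y\vee z=f$ and $x\wedge y=x\wedge z=y\wedge z=e$. The issue is only that a priori $\mathrm{rank}(f)-\mathrm{rank}(e)$ could be larger than $2$; I would exploit modularity (purity plus the rank identity displayed just before the lemma) to shrink this gap down to exactly $2$.

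First I would record what the rank identity forces on $L_0$. Applying $\mathrm{rank}(p)+\mathrm{rank}(q)=\mathrm{rank}(p\wedge q)+\mathrm{rank}(p\vee q)$ to each pair among $x,y,z$ gives $\mathrm{rank}(x)+\mathrm{rank}(y)=\mathrm{rank}(e)+\mathrm{rank}(f)$, and likewise for the other two pairs, so $\mathrm{rank}(x)=\mathrm{rank}(y)=\mathrm{rank}(z)=:r$ and $\mathrm{rank}(e)+\mathrm{rank}(f)=2r$; writing $\delta=r-\mathrm{rank}(e)=\mathrm{rank}(f)-r$ we see $\mathrm{rank}(f)-\mathrm{rank}(e)=2\delta$, an even number, and the target is the case $\delta=1$. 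The plan is then an induction on $\delta$: if $\delta\ge 2$ I would construct a strictly smaller diamond sublattice inside $L_0$ and repeat. To produce it, pick any element $w$ strictly between $e$ and $x$ in a maximal chain (such $w$ exists since $\mathrm{rank}(x)-\mathrm{rank}(e)=\delta\ge 2$), i.e.\ $e<w<x$, and consider the ``pushed up'' triple $x'=x$, $y'=w\vee y$, $z'=w\vee z$ together with $e'=w$. The key computations use modularity to verify that $\{e', x', y', z', f'\}$ with $f'=x'\vee y'$ again forms a diamond and that its rank-height $2\delta'$ strictly decreases.

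The main obstacle, and the step I would spend the most care on, is verifying that this pushed-up configuration is genuinely a diamond and not a degenerate or collapsed picture. Concretely I must check the six ``crossing'' meets and joins: that $y'\wedge z'=e'$, $x'\wedge y'=e'$, $x'\wedge z'=e'$ and dually that all three pairwise joins equal a common top $f'$, with $x',y',z'$ pairwise incomparable and strictly between $e'$ and $f'$. Each of these is a modular-law manipulation; for instance $x\wedge(w\vee y)=w\vee(x\wedge y)=w\vee e=w$ uses modularity in the form $x\wedge(w\vee y)=(x\wedge w)\vee$-type rearrangement valid because $w\le x$, together with $x\wedge y=e\le w$. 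I would present these identities cleanly using the hypothesis $w\le x$ (so modularity applies with $w$ in the role of the element below $x$) and the diamond relations of $L_0$. Once the pushed-up diamond is confirmed, a rank computation shows $\mathrm{rank}(\min)$ has risen by $\mathrm{rank}(w)-\mathrm{rank}(e)\ge 1$ while the structure remains a diamond, so $\delta$ drops by at least one; iterating until $\delta=1$ yields a diamond sublattice $L'$ with $\mathrm{rank}(\max L')-\mathrm{rank}(\min L')=2$, as required. I would close by noting that since this reference result is \cite[Lemma 1.2]{EH}, it suffices to sketch this reduction rather than belabor every one of the routine modular identities.
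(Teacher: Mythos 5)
The paper does not actually prove this lemma --- it is quoted verbatim from \cite[Lemma 1.2]{EH} --- so the only thing to assess is whether your reduction is sound, and unfortunately it is not. The step you yourself single out as the delicate one, namely that the ``pushed up'' triple $x'=x$, $y'=w\vee y$, $z'=w\vee z$ meets pairwise in $e'=w$, is false. Your computation $x\wedge(w\vee y)=w\vee(x\wedge y)=w$ is correct (it uses $w\le x$), but no such modular rearrangement is available for $y'\wedge z'=(w\vee y)\wedge(w\vee z)$, because $w$ is not below $y$ or $z$. In fact the rank identity you quote kills this element: since $w\wedge y=w\wedge z=e$ one gets $\mathrm{rank}(w\vee y)=\mathrm{rank}(w\vee z)=\mathrm{rank}(w)+\delta$, while $(w\vee y)\vee(w\vee z)=w\vee f=f$ has rank $\mathrm{rank}(e)+2\delta$, so $\mathrm{rank}\bigl((w\vee y)\wedge(w\vee z)\bigr)=2\,\mathrm{rank}(w)-\mathrm{rank}(e)>\mathrm{rank}(w)$ whenever $w>e$. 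Thus $y'\wedge z'$ lies strictly above $w$ and the configuration is never a diamond. A concrete instance: in the lattice of subspaces of $V=\FF_2^4$ take $e=0$, $f=V$, $x=\langle e_1,e_2\rangle$, $y=\langle e_3,e_4\rangle$, $z=\langle e_1+e_3,e_2+e_4\rangle$ and $w=\langle e_1\rangle$; then $(w+y)\cap(w+z)=\langle e_1,e_3\rangle\neq w$. Note also that $f'=x\vee(w\vee y)=f$, so even the top of your new configuration does not descend.

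The even-parity observation $\mathrm{rank}(f)-\mathrm{rank}(e)=2\delta$ and the induction-on-$\delta$ strategy are fine; what is missing is a correct descent step, and it genuinely requires a different construction. One that works: with $e<u<x$ and $\mathrm{rank}(u)=\mathrm{rank}(e)+1$, put $m=(u\vee y)\wedge(u\vee z)$ (the very element that broke your argument, now used as the new \emph{top}, of rank $\mathrm{rank}(e)+2$) and check, again by modularity and the rank identity, that $\{e,\;u,\;y\wedge m,\;z\wedge m,\;m\}$ is a diamond of rank-height exactly $2$; equivalently one chooses a diamond with $\mathrm{rank}(\max)-\mathrm{rank}(\min)$ minimal and derives a contradiction from such a configuration, which is how \cite{EH} proceeds. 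As written, your proof establishes only three of the six required meet/join identities and the remaining three fail, so the argument does not go through.
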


\begin{Theorem} \label{modular} Let $L$ be a modular lattice. Then $L$ is distributive if and only if $H[L]$ admits a combinatorial Koszul filtration.
\end{Theorem}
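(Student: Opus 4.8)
The plan is to get one direction almost for free from the first theorem and to spend the real effort on the converse. If $L$ is distributive then, by part (2) of the preceding theorem, all poset ideals of $R=H[L]$ form a Koszul filtration; since each poset ideal $(\overline{J})$ is by definition generated by the residue classes $\overline{a}$ ($a\in J$) of variables, this filtration is already combinatorial, which settles ``$\Rightarrow$''. For the converse I will argue by contraposition: assuming $L$ modular but not distributive, I suppose a combinatorial Koszul filtration $\mathcal F$ exists and derive a contradiction. By Lemma~\ref{lemma} I fix a diamond sublattice $L'$ with $e=\min L'$, $f=\max L'$ and $\mathrm{rank}(f)-\mathrm{rank}(e)=2$, and I write $x,y,z$ for its middle elements. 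Since $L$ is modular and $[e,f]$ has length $2$, the interval $[e,f]$ is the length-two modular lattice on $e,f$ and atoms $a_1,\dots,a_n\supseteq\{x,y,z\}$ (so $n\ge 3$), any two distinct atoms satisfying $a_i\wedge a_j=e$, $a_i\vee a_j=f$; hence $\overline{a_i}\,\overline{a_j}=\overline{e}\,\overline{f}$ in $R$ for $i\ne j$. Two book-keeping facts I will use: in degree $2$ the length-two multichains are a $K$-basis of $R$ (the Hibi binomials have pairwise distinct incomparable leading monomials, so they are triangular in degree $2$ for any $L$), and consequently a $2$-chain $\overline{p}\,\overline{q}$ lies in a variable-generated ideal $(\overline{S})$ iff some $s\in S\cap[p,q]$ has a complement in $[p,q]$; in particular $\overline{e}\,\overline{f}\in(\overline{S})$ iff $S\cap[e,f]\ne\emptyset$. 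Reversing the descent from axiom (3), a combinatorial filtration builds $\mathfrak m$ from $0$ by adjoining one variable at a time, each passage $J\mapsto J+(\overline v)$ leaving $J:\overline v$ in $\mathcal F$; I examine the first stage at which a variable of $[e,f]$ is adjoined, so that just before it $J=(\overline S)$ with $S\cap[e,f]=\emptyset$ and $\overline{e}\,\overline{f}\notin J$.

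First suppose the adjoined variable is an atom $w=a_k$. Choosing two further atoms $a_i,a_j$, the identities $\overline{w}\,\overline{a_i}=\overline{e}\,\overline{f}=\overline{w}\,\overline{a_j}$ give $\overline{a_i}-\overline{a_j}\in J:\overline{w}$; since $J:\overline w$ is generated by variables, a difference of two coordinates can lie in it only if both $\overline{a_i},\overline{a_j}\in J:\overline w$, whence $\overline{e}\,\overline{f}=\overline{a_i}\,\overline{w}\in J$ — contradicting $\overline{e}\,\overline{f}\notin J$.

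It remains to handle the case where the first adjoined variable of $[e,f]$ is $e$ (the case $f$ is dual, via the equality $I_L=I_{L^{\mathrm{op}}}$, under which the diamond has bottom $f$ and top $e$). Here the rank hypothesis is decisive. From $\overline{e}(\overline{f}\,\overline{x}-\overline{f}\,\overline{y})=\overline{y}\,\overline{z}\,\overline{x}-\overline{x}\,\overline{z}\,\overline{y}=0$ I get that $\omega:=\overline{f}\,\overline{x}-\overline{f}\,\overline{y}$ is a nonzero degree-$2$ element of $J:\overline e$; the goal is to show $\omega\notin\big((J:\overline e)_1\big)$, so that $J:\overline e$ is not generated by linear forms, contradicting axiom (1). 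Because $\mathrm{rank}(x)-\mathrm{rank}(e)=1$, the interval $[e,x]$ equals $\{e,x\}$, so among all $a\in L$ only $a=x$ sorts with $e$ to the $2$-chain $(e,x)$; therefore the $(e,x)$-coefficient of $\overline{e}\,\ell$ is exactly the $\overline{x}$-coefficient of $\ell$, and as $(e,x)\notin J$ this vanishes for every $\ell\in(J:\overline e)_1$. The same argument (using $[e,y]=\{e,y\}$ and $(e,f)\notin J$) shows that each such $\ell$ has zero coefficient on $\overline{y}$ and on $\overline{f}$ as well. Dually, because $\mathrm{rank}(f)-\mathrm{rank}(x)=1$ the interval $[x,f]$ equals $\{x,f\}$, so the $2$-chain $(x,f)$ occurs in a product $\overline{a}\,\overline{m}$ only when $\{a,m\}=\{x,f\}$. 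Hence in any expression $\omega=\sum_i\ell_i m_i$ with $\ell_i\in(J:\overline e)_1$ and $m_i$ variables, the $(x,f)$-coefficient is a sum of $\overline{x}$- and $\overline{f}$-coefficients of the $\ell_i$, so it is $0$, while in $\omega$ it equals $1$: the required contradiction.

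The step I expect to be the crux is this last coefficient computation. In the poset-ideal setting of the first theorem one has the explicit description of the linear part of the colon as the span of $\{a:a\not\ge e\}$, but for a merely variable-generated $J$ no such description is available; what rescues the argument is precisely the rank-two hypothesis, which collapses $[e,x]$, $[e,y]$, $[x,f]$, $[y,f]$ to two-element chains and thereby lets one read off the offending coordinates directly on the length-two multichain basis. This is also exactly where modularity is indispensable — Example~\ref{2.2} shows that the pentagon, where these intervals are longer and the relation $\overline{x}\,\overline{y}=\overline{e}\,\overline{f}$ is absent, does admit a combinatorial Koszul filtration.
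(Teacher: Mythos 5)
Your proof is correct and follows essentially the same route as the paper's: the forward direction via the poset-ideal filtration of \cite{EHH}, and the converse via Lemma~\ref{lemma}, the first (minimal) ideal of the filtration meeting $[e,f]$, and the same two witnesses $\overline{a_i}-\overline{a_j}\in J:\overline{w}$ and $\overline{f}\,\overline{x}-\overline{f}\,\overline{y}\in J:\overline{e}$. Your systematic use of the length-two multichain basis of $R_2$ is a clean repackaging of the paper's ad hoc coefficient comparisons lifted to $K[L]$, and it has the merit of making explicit the step the paper leaves terse, namely why $\overline{f}(\overline{x_1}-\overline{x_2})$ cannot lie in the ideal generated by the linear part of the colon (your observation that $[x,f]=\{x,f\}$ forces the $(x,f)$-coefficient of any $\ell_i m_i$ to come only from the vanishing $\overline{x}$- and $\overline{f}$-coefficients of $\ell_i$).
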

\begin{proof} If $L$ is distributive, then $H[L]$ admits a Koszul filtration consisting of poset ideals by \cite[Corollary 2.6]{EHH},  which is certainly  combinatorial. Thus the direction ``only if" is proved.

  Suppose now that $L$ is non-distributive. Then $L$ has a diamond sublattice $L'$ such that $\mathrm{rank}(\max L')-\mathrm{rank}( \min L')=2$ by Lemma~\ref{lemma}.
Set $e=\min L'$ and $f= \max L'$. Then for any distinct $x,y$ in the open interval $(e,f)$, they are incomparable and $x\vee y=f$ and $x\wedge y=e$. For convenience, we write $(e,f)=\{x_1,x_2,\ldots,x_n\}$ for some $n\geq 3$.

\vspace{1mm}
{\it  Claim:} If $S$ is a subset of $L$ such that $S\cap [e,f]=\emptyset$, then $(\overline{S}):\overline{x}$ cannot be  generated by residue classes of some variables for any $x\in [e,f]$.
 \vspace{1mm}

 We first consider the case  when  $x\in (e,f)$, say $x=x_1$.  Let us show that $x_1x_j\notin (I_L,S)$ for $j=2,\ldots,n$. In fact, if $x_1x_j\in (I_L,S)$ for some $j\neq 1$, then there exist $k_i,l_i\in K$ (the ground field) and $t_i\in L$ such that  $$x_1x_j=\sum_{a_i,b_i} k_i(a_ib_i-(a_i\vee b_i)(a_i\wedge b_i))+\sum_{s_i\in S} l_is_it_i. \qquad (4)$$ Here $\{a_i,b_i\}$ ranges through all incomparable pairs of $L$.
   Without loss of generality we assume that $a_1b_1=x_1x_j$ and $\{a_i,b_i\}\subseteq (e,f)$ for $i\leq m(:=n(n-1)/2)$ and $\{a_i,b_i\}\nsubseteq (e,f)$ for $i>m$. This implies $(a_i\vee b_i)(a_i\wedge b_i)=ef$ if $1\leq i\leq m$ and $(a_i\vee b_i)(a_i\wedge b_i)\neq ef$ if $i>m$. Note that $s_jt_j\neq a_ib_i$ if $s_j\in S$ and $i\leq m$. Comparing the coefficients
of $a_ib_i$ with $i\leq m$ and $ef$  in (4) respectively, we obtain $k_1=1$, $k_2=\cdots=k_m=0$ and $k_1+k_2+\cdots+k_m=0$, a contradiction. Thus $x_j\notin (I_L,S):x_1$ for each $j>1$. But one has $(x_2-x_3)$ belongs to  $(I_L,S):x_1$ and this implies that $(\overline{S}):\overline{x_1}$ is not generated by the residue classes of variables.

 \vspace{1mm}

 For the case when $x=e$, we first see that both $e(f+k_1a_1+\cdots+k_ra_r)$ and $e(x_j+k_1b_1+\cdots+k_vb_v)$ do not belong to $(I_L,S)$ for any positive integers $r,v$, any $k_i\in K$, $f\neq a_i\in L$, $x_j\neq b_i\in L$ and $j=1,\ldots,n$. This fact can be proved  in a similar manner as we prove $x_1x_j\notin (I_L,S)$ in the preceding paragraph and so we omit its proof. It follows that neither $f$ nor $x_i,i=1,\ldots,n$ appears in the support of any linear polynomial in $(I_L,S):e$.  Thus, if $(\overline{S}):\overline{e}$ is generated by linear forms, then
  $f(x_1-x_2)$ does not belong to $(I_L,S):e$. This leads to a contradiction, since $ef(x_1-x_2)\in I_L$. Hence $(\overline{S}):\overline{e}$
is not generated by linear forms. The final case when $x=f$ can be proved in the same way as in the case when $x=e$. Thus our claim has been proved.

 \vspace{1mm}
Now assume on the contrary that $H[L]$ admits a combinatorial Koszul filtration $\mathcal{F}$. Note that $\mathcal{F}$ has a natural partial order induced by inclusion. Let $(\overline{T})$ be a minimal element in $\mathcal{F}$ satisfying $T\cap [e,f]\neq \emptyset$ . Then $T\cap [e,f]$ consists of a single  element, say $x$. Moreover $(\overline{T\setminus \{x\}})$ is a unique element in $\mathcal{F}$
such that $(\overline{T})/(\overline{T\setminus \{x\}})$ is cyclic. It follows that  $(\overline{T\setminus \{x\}}):\overline{x}\in \mathcal{F}$,   which is contradicted to our claim.
\end{proof}

\begin{Example} {\em Let $D$ be the diamond lattice as in Figure~\ref{G}. Then $H[D]$ admits no combinatorial Koszul filtrations by Theorem~\ref{modular}. However $H[D]$ has the following Koszul filtration:$$(0),(\overline{x}),(\overline{y}-\overline{z}),(\overline{x},
\overline{y}),(\overline{x},\overline{z}),(\overline{x},\overline{y},\overline{z}),
(\overline{x},\overline{y},\overline{z},\overline{e}),(\overline{x},\overline{y},
\overline{z},\overline{f}),(\overline{x},\overline{y},\overline{z},\overline{e},\overline{f}).$$
One can check the following equalities  by Singular \cite{DGPS}:

\noindent$(0):(\overline{x})=(\overline{y}-\overline{z}); $ $(0):(\overline{y}-\overline{z})=(\overline{x})$; $(\overline{x}):(\overline{y})=(\overline{x},\overline{z});$ $(\overline{x}):(\overline{z})=(\overline{x},\overline{y});$ $(\overline{x},\overline{y}):(\overline{z})=(\overline{x},\overline{y});$
$(\overline{x},\overline{y},\overline{z}):(\overline{e})=(\overline{x},\overline{y},\overline{z},\overline{f});$
$(\overline{x},\overline{y},\overline{z}):(\overline{f})=(\overline{x},\overline{y},\overline{z},\overline{e});$
$(\overline{x},\overline{y},\overline{z},\overline{e}):(\overline{f})=(\overline{x},\overline{y},\overline{z},\overline{e}).$
 }
\end{Example}

It would be of interest to know if $H[L]$ admits a Koszul filtration for any finite lattice $L$.

\vspace{2mm}

\noindent {\bf Acknowledge:}  Thank the referee very much for his/her careful reading and  interesting comments!


\begin{thebibliography}{gg}

\bibitem{CTV}  A. Conca, N. V. Trung, G. Valla, \textit{Koszul property for points in projective spaces}, Math. Scand. 89 (2001), 201--216.

\bibitem{DGPS} W. Decker, G. M. Greuel, G. Pfister, H. Sch$\ddot{\mathrm{o}}$nemann, \textsc{Singular} 4-1-0 -- A computer algebra system for polynomial computations, \verb"http://www.singular.uni-kl.de "(2016).

\bibitem{EHH} V. Ene, J. Herzog, T. Hibi, \textit{Linear flags and Koszul filtrations}, Kyoto J. Math.
55 (2015), 517--530.







\bibitem{EH} V. Ene, T. Hibi, \textit{The  join-meet ideal of a finite lattice}, J. of Commut. Algebra, 5(2) (2013), 209--230.


 \bibitem{HHR}
 J. Herzog, T. Hibi, G. Restuccia, \textit{Strongly Koszul algebras}, Math. Scand. 86 (2000), 161--178.
 \bibitem{H} T. Hibi, \textit{Distributive lattices, affine semigroup rings and algebras with straightening laws}, in: Commutative Algebra and Combinatorics, edited by M. Nagata and H. Matsumura, Advanced Studies in Pure Math. Vol. 11 (North--Holland, Amsterdam, 1987),  93--109.

\bibitem{S} R. P. Stanley, \textit{Enumerative Combinatorics}; Vol. I (Wadsworth and Brooks/Cole, Monterey, CA, 1986).


\end{thebibliography}
\end{document}